\theoremstyle{plain}
\numberwithin{equation}{section}
\newtheorem{theo}{Theorem}[section]
\newtheorem{lemm}[theo]{Lemma}
\newtheorem{rema}[theo]{Remark}
\newtheorem{corr}[theo]{Corollary}
\newtheorem{prop}[theo]{Proposition}
\begin{document}

\title{Convex Hulls of Grassmannians and Combinatorics of Symmetric Hypermatrices}
\author{Kazumasa Narita}
\thanks{Graduate School of Mathematics, Nagoya University, Furo-cho, Chikusa-ku, Nagoya 464-8602, Japan, m19032e@math.nagoya-u.ac.jp}
\date{}

\maketitle

\begin{abstract}
It is known that the complex Grassmannian of $k$-dimensional subspaces can be identified with the set of projection matrices of rank $k$. It is also classically known that the convex hull of this set is the set of Hermitian matrices with eigenvalues between $0$ and $1$ and summing to $k$. We give a new proof of this fact. We also give an existence theorem for a certain combinatorial class of hypermatrices by a similar argument. This existence theorem can be rewritten into an existence theorem for a uniform weighted hypergraph with given weighted degree sequence.
\end{abstract}

\section{Introduction}

 The convex hull of the set of projection matrices of constant rank is classically known. In this paper, we give a new approach to identify this convex hull and study combinatorics of symmetric hypermatrices. These subjects appear to be unrelated, but the proofs of both results are similar.

Tai \cite{Tai} showed that projective spaces can be embedded isometrically into certain Euclidean spaces of matrices. More explicitly, the complex projective space $\mathbf{C}P^{n}$, for example, can be embedded in a space of Hermitian matrices as follows:
\begin{equation*}
\mathbf{C}P^{n} \stackrel{\cong}{\longrightarrow} \{ A \in HM(n+1) \mid A^{2} = A, \mathrm{tr}A=1 \}, [z] \mapsto \frac{1}{z^{*}z} zz^{*}
\end{equation*}
Here $z$ is a column vector in $\mathbf{C}^{n+1}$. This embedding is well-defined and sends $[z]$, which can be identified with a complex line in $\mathbf{C}^{n+1}$, to the projection matrix onto the complex line $[z]$. Later, Dimitri\'{c} \cite{Dimitric} showed that Tai's embedding can be generalized to Grassmannians by identifying points in Grassmannians and the corresponding projection matrices. That is, the complex Grassmannian of $k$-dimensional subspaces $G_{k}(\mathbf{C}^{n})$ can be embedded into $HM(n)$ as
\begin{equation}
G_{k}(\mathbf{C}^{n}) \cong \{ A \in HM(n) \mid A^{2} = A, \: \mathrm{tr}A = k \}.
\end{equation}
If a Hermitian matrix $A$ is positive semidefinite, we write $A \succeq 0$. Fillmore and Williams \cite{FW} proved the following theorem:

\begin{theo}\emph{(\cite{FW})} 
\label{fw}
The convex hull of the embedded Grassmannian $G_{k}(\mathbf{C}^{n})$ in $HM(n)$ is
\begin{equation*}
\Omega := \{ A \in HM(n) \mid A \succeq 0,\:  I_{n}-A \succeq 0, \: \mathrm{tr} A = k \}
\end{equation*}
and the set of extreme points of $\Omega$ is $G_{k}(\mathbf{C}^{n})$. 
\end{theo}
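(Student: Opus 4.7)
The plan is to prove both containments $\text{conv}(G_k(\mathbf{C}^n))\subseteq\Omega$ and $\Omega\subseteq\text{conv}(G_k(\mathbf{C}^n))$ directly via spectral theory, and then identify the extreme points by a perturbation argument.

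First I would establish the easy inclusion $\text{conv}(G_k(\mathbf{C}^n))\subseteq\Omega$. Every rank-$k$ projection $A$ satisfies $A\geq 0$ (its eigenvalues are $0,1$), $I_n-A\geq 0$ (it is also a projection), and $\text{tr}\,A=k$; and $\Omega$ is convex since it is cut out by linear equalities and semidefinite inequalities. Hence the convex hull of the projections lies inside $\Omega$.

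For the reverse inclusion I would take an arbitrary $A\in\Omega$ and apply the spectral theorem to write $A=\sum_{i=1}^{n}\lambda_i v_iv_i^{*}$ with $\{v_i\}$ an orthonormal eigenbasis and $\lambda=(\lambda_1,\dots,\lambda_n)\in[0,1]^{n}$ with $\sum_i\lambda_i=k$. The key combinatorial fact is that the hypersimplex
\begin{equation*}
P_{k,n}:=\{x\in[0,1]^{n}\mid x_1+\cdots+x_n=k\}
\end{equation*}
is the convex hull of its $\{0,1\}$-vertices, namely the indicator vectors $e_S$ of $k$-subsets $S\subseteq\{1,\dots,n\}$; this is a standard polytope statement that I would either cite or verify by a short argument (any non-integer coordinate can be perturbed in two places, showing only $\{0,1\}$-vectors are extreme). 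Writing $\lambda=\sum_{j}t_{j}e_{S_j}$ with $t_j\geq 0$, $\sum_j t_j=1$, and then substituting into the spectral decomposition yields
\begin{equation*}
A=\sum_{j}t_{j}\Bigl(\sum_{i\in S_j}v_{i}v_{i}^{*}\Bigr),
\end{equation*}
and each inner sum is the orthogonal projection onto $\text{span}\{v_i\mid i\in S_j\}$, hence an element of $G_k(\mathbf{C}^n)$. This expresses $A$ as a convex combination of rank-$k$ projections.

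Finally, for the characterization of extreme points, every $P\in G_k(\mathbf{C}^n)$ is extreme in $\Omega$ because its eigenvalues $0$ and $1$ lie at the boundary of $[0,1]$, so any representation $P=\frac12(B+C)$ with $B,C\in\Omega$ forces $B=C=P$. Conversely, suppose $A\in\Omega$ has some eigenvalue $\lambda_i\in(0,1)$; since $\sum_i\lambda_i=k\in\mathbf{Z}$, there must be at least one other eigenvalue $\lambda_j\in(0,1)$, and for sufficiently small $\varepsilon>0$ the matrices
\begin{equation*}
A_{\pm}:=A\pm\varepsilon(v_iv_i^{*}-v_jv_j^{*})
\end{equation*}
still lie in $\Omega$, giving $A=\tfrac12(A_++A_-)$, so $A$ is not extreme. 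Thus the extreme points are exactly the matrices whose eigenvalues all lie in $\{0,1\}$ with trace $k$, i.e.\ $G_k(\mathbf{C}^n)$. The only delicate step is the combinatorial one, namely identifying the vertices of the hypersimplex; once that is granted the rest of the proof is a routine application of the spectral theorem.
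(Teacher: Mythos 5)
Your argument is correct, and for the convex-hull half it follows the same skeleton as the paper: diagonalize $A\in\Omega$, reduce to a statement about the eigenvalue vector $\lambda\in[0,1]^n$ with $\sum_i\lambda_i=k$, and conjugate back. The difference lies in the combinatorial engine. The paper observes that $\lambda$ is majorized by $(1,\dots,1,0,\dots,0)$ and invokes Rado's theorem (that $x\preceq y$ iff $x$ is a convex combination of permutations of $y$); you instead use the vertex description of the hypersimplex $P_{k,n}$, proved directly by perturbing two non-integer coordinates. These are the same fact in this special case --- the condition $\lambda\preceq(1^k,0^{n-k})$ unwinds exactly to $\lambda\in P_{k,n}$ --- so your route is more elementary and self-contained, while the paper's deliberate choice of Rado's theorem is the point of the paper (it threads the argument back through Birkhoff's theorem to give a new derivation of Fan's theorem), so you lose that connection. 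You also gain something the paper omits: the paper only reproves the convex-hull statement and cites Fillmore--Williams for the extreme-point characterization, whereas you supply a proof of both directions of that characterization. Your sketch there is sound but terse; the claim that $P=\frac12(B+C)$ forces $B=C=P$ deserves the one extra line that for a unit eigenvector $v$ of $P$ with $Pv=0$ one has $v^{*}Bv=v^{*}Cv=0$ and hence $Bv=Cv=0$ by positive semidefiniteness (and dually with $I_n-B$, $I_n-C$ on the $1$-eigenspace), after which $B$ and $C$ agree with $P$ on an eigenbasis. The non-extremality direction (two eigenvalues in $(0,1)$ because the trace is an integer, then perturb along $v_iv_i^{*}-v_jv_j^{*}$) is complete as stated.
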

They used this fact to study $k$-numerical ranges of matrices, the notion introduced by Halmos \cite[p.\,111]{Halmos}. Overton and Womersley \cite{OW} showed that Theorem \ref{fw} implies Fan's theorem \cite{Fan}, which is an extension of Rayleigh's principle and gives a characterization of the sum of the largest $k$ eigenvalues of a Hermitian matrix.

As Overton and Womersley \cite{OW} pointed out, the conventional proof of Fan's theorem relies heavily on the following classical theorem (see \cite[p.\,785]{MO}, for example):

\begin{theo}
\label{birkhoff}
Let $\Omega_{1}$ be the set of permutation matrices of order $n$ and $\Omega_{2}$ be the set of doubly stochastic matrices of order $n$. Then $\Omega_{2}$ is the convex hull of $\Omega_{1}$ and $\Omega_{1}$ is the set of extreme points of $\Omega_{2}$.
\end{theo}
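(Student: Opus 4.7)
The plan is to prove the convex-hull identity and the extreme-point characterization in tandem. The inclusion $\mathrm{conv}(\Omega_{1}) \subseteq \Omega_{2}$ is immediate: every permutation matrix is doubly stochastic and $\Omega_{2}$ is convex (being cut out by linear equalities and inequalities). The substantive content is the reverse inclusion.

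For the reverse inclusion, I would induct on the number $N(A)$ of nonzero entries of $A \in \Omega_{2}$. The key lemma is that every $A \in \Omega_{2}$ admits a permutation $\sigma$ with $A_{i,\sigma(i)} > 0$ for all $i$; I would prove this by Hall's marriage theorem applied to the bipartite support graph on row- and column-indices (edge $i \sim j$ iff $A_{ij} > 0$). Given a set $S$ of rows with neighborhood $T$ in this graph, one computes $|S| = \sum_{i \in S} \sum_{j} A_{ij} = \sum_{i \in S} \sum_{j \in T} A_{ij} \leq \sum_{j \in T} \sum_{i} A_{ij} = |T|$, verifying Hall's condition. Given such $\sigma$, set $t = \min_{i} A_{i,\sigma(i)} \in (0,1]$; if $t = 1$ then $A = P_{\sigma}$, otherwise $(A - tP_{\sigma})/(1-t)$ lies in $\Omega_{2}$ with strictly smaller $N$, and the induction closes.

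The extreme-point characterization has two halves. Every $P_{\sigma}$ is extreme: if $P_{\sigma} = \lambda A + (1-\lambda) B$ with $A, B \in \Omega_{2}$ and $0 < \lambda < 1$, nonnegativity forces $A_{ij} = B_{ij} = 0$ wherever $(P_{\sigma})_{ij} = 0$, after which the row-sum constraint forces $A = B = P_{\sigma}$. Conversely, if $A \in \Omega_{2} \setminus \Omega_{1}$ has a non-integer entry, then the integer row- and column-sums force each row and each column containing a non-integer entry to contain at least two such, so the bipartite graph of non-integer entries has minimum degree $\geq 2$ and hence contains an even cycle. Perturbing $A$ by $\pm \epsilon$ along this cycle (alternating signs between consecutive edges) yields two distinct elements of $\Omega_{2}$ with midpoint $A$, so $A$ is not extreme.

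I expect the verification of Hall's condition to be the only genuinely technical step; once that lemma is in hand, both the induction and the extreme-point arguments are essentially structural. An alternative route would be to establish the extreme-point characterization first and then invoke the Krein--Milman theorem together with compactness of $\Omega_{2}$, but since $\Omega_{1}$ is finite the inductive argument is self-contained and yields the identity without any closure.
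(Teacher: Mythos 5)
Your proof is correct, but note that the paper does not prove Theorem \ref{birkhoff} at all: it is quoted as a classical result with a reference to Marshall--Olkin, and is used only as background motivation (the point of the paper being that Rado's majorization theorem, itself deducible from Theorem \ref{birkhoff}, yields the convex hulls of Grassmannians). What you have written is the standard Birkhoff--von Neumann argument: Hall's marriage theorem produces a positive diagonal, induction on the size of the support gives the convex-hull identity, and the alternating-cycle perturbation shows that any non-permutation doubly stochastic matrix fails to be extreme. All steps check out, including the verification of Hall's condition and the observation that $(A-tP_{\sigma})/(1-t)$ stays in $\Omega_{2}$ with strictly smaller support; so the argument is complete and self-contained, and as you note it avoids any appeal to Krein--Milman.
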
 
 

In this paper, we give another proof of Theorem \ref{fw} by using a combinatorial theorem by Rado \cite{Rado}. This approach should be of interest since it provides a new path deducing Fan's theorem from Theorem \ref{birkhoff} if one notices that Rado's theorem can be proved by using Theorem \ref{birkhoff} (see \cite[p.\,60]{Zhan2}, for example).

In the second part of the paper, we study combinatorics of symmetric hypermatrices and weighted uniform hypergraphs. A $d$-uniform hypergraph is a hypergraph each of whose edge has exactly $d$ vertices. A $d$-uniform hypergraph is called \textit{simple} if it has no repeated edges. A finite sequence of nonnegative integers is called \textit{$d$-graphic} if it can be realized as the degree sequence of some simple $d$-uniform hypergraph. In particular, a simple $2$-uniform hypergraph is nothing but a simple graph. Havel \cite{Havel} and Hakimi \cite{Hakimi} independently gave a characterization of a $2$-graphic sequence. Today there are many other characterizations including the classic Erd\H{o}s--Gallai theorem \cite{EG} (see \cite{HS}). Dewdney \cite{Dewdney} generalized the Havel--Hakimi algorithm and characterized a $d$-graphic sequence as follows:
\begin{theo}\emph{(\cite{Dewdney})}
\label{Dewdney}
Let $d, n \in \mathbf{N}$ with $d \geq 2$ and $n \geq d$. Let $D= (d_{1}, \ldots, d_{n})$ be a nonincreasing sequence of nonnegative integers. Then $D$ is $d$-graphic if and only if there exists a nonincreasing sequence $D' = (d'_{2}, \ldots, d'_{n})$ of nonnegative integers satisfying all of the following conditions:
\begin{enumerate}
\item $D'$ is $(d-1)$-graphic. 
\item $\sum_{i=2}^{n} d'_{i} = (d-1)d_{1}$.
\item $D'' := (d_{2}-d'_{2}, \ldots, d_{n}-d'_{n}, 0)$ is $d$-graphic.
\end{enumerate}
\end{theo}
In contrast to the case $d=2$, this is the the only known characterization of a $d$-graphic sequence for $d \geq 3$.  It is known that this characterization does not yield an efficient algorithm, so conditions for a sequence $D$ to be $d$-graphic have been investigated for a long time (see \cite{AAS}, \cite{Behrens} and \cite{Frosini}, for example).

On the other hand, Hakimi \cite{Hakimi} also characterized when a nonincreasing sequence of nonnegative real numbers is realized as the weighted degree sequence of some weighted graph. This chracterization can be interpreted as an existence theorem for a symmetric nonnegative matrix with given row sums and with only 0's on the main diagonal (see Section $4$ of this paper and also \cite[Section 2.3, 7.2]{Brualdi}). In this paper, we generalize Hakimi's theorem to symmetric hypermatrices by a similar argument to that for determining the convex hulls of Grassmannians. The notion of alternating sign matrices, one of combinatorial matrix classes, was generalized to hypermatrices by Brualdi and Dahl \cite{BD} recently. Still few results are known about combinatorics of hypermatrices. Hence the generalization of Hakimi's theorem to symmetric hypermatrices itself is of interest. In addition, the theorem can be rewritten into the following:
\begin{theo}
\label{MainTheorem}
Let $d, n \in \mathbf{N}$ with $d \geq 2$ and $n \geq d$. Let $D = (d_{1}, \ldots, d_{n})$ be a nonincreasing sequence of nonnegative numbers. Then there exists a $d$-uniform weighted hypergraph with the weighted degree sequence $D$ if and only if the inequality
\begin{equation}
\label{condition2}
dd_{1} \leq \sum_{i=1}^{n}d_{i}
\end{equation}
holds. Moreover, if this holds, then there exists a $d$-uniform weighted hypergraph with the weighted degree sequence $D$ that has at most $n$ edges with positive weights.
\end{theo} 
Letting $d=2$ recovers the aforementioned theorem due to Hakimi. (See Section 4 for the precise definitions of a $d$-uniform weighted hypergraph and a weighted degree sequence.)  At the end of this paper, we clarify the relationship between Theorem $\ref{Dewdney}$ and Theorem $\ref{MainTheorem}$.

\section{Majorization}
In this section, we set up tools from combinatorics that we use later. For $x = (x_{1}, \ldots ,x_{n}) \in \mathbf{R}^{n}$, we rearrange the components in decreasing order and label the new subscripts $x = (x_{[1]}, \ldots ,x_{[n]})$ so that we have $x_{[1]} \geq  \cdots  \geq x_{[n]}.$ If $x, y \in \mathbf{R}^{n}$ satisfy
\begin{equation*}
\sum_{i=1}^{k} x_{[i]} \leq \sum_{i=1}^{k} y_{[i]} \quad \mbox{for all} \quad 1 \leq k \leq n-1
\end{equation*}
and
\begin{equation*}
\sum_{i=1}^{n} x_{i} = \sum_{i=1}^{n} y_{i},
\end{equation*}
then we say that $x$ is \textit{majorized} by $y$, which is denoted by $x \preceq y$. If $\tilde{x} \in \mathbf{R}^{n}$ can be obtained by a rearrangement of components of  $x \in \mathbf{R}^{n}$, then $\tilde{x}$ is called a  \textit{permutation} of $x$ (see \cite[Chaper 1]{MO} or \cite[Section 3.2]{Zhan2}, for example).

Rado \cite{Rado} showed that for $x, y \in \mathbf{R}^{n}$, $x$ is majorized by $y$ if and only if $x$ is a convex combination of permutations of $y$. Although $y \in \mathbf{R}^{n}$ has $n!$ permutations in general, Zhan \cite{Zhan} pointed out that the number of permutations in the convex combination can be reduced less than or equal to $n$ by using Carath\'{e}odory's theorem. That is, we have the following result, which is a crucial tool in this paper:

\begin{theo}
\label{Rado}
Let $x, y \in \mathbf{R}^{n}$. Then $x$ is majorized by $y$ if and only if $x$ is a convex combination of at most $n$ permutations of $y$.
\end{theo}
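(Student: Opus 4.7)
The statement packages two directions. The ``if'' direction is immediate from Rado's original theorem, which already gives the equivalence ``$x \preceq y$ iff $x$ is a convex combination of permutations of $y$'' without any bound on the number of summands; a convex combination of at most $n$ permutations is a fortiori a convex combination of permutations, so $x \preceq y$ follows.

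For the ``only if'' direction, my plan is to invoke Rado's theorem as cited in the paragraph above the statement: if $x \preceq y$, then $x$ lies in the polytope $P := \mathrm{conv}(S)$, where $S$ is the (at most $n!$-element) set of all permutations of $y$. The remaining task is purely geometric, namely to reduce the number of vertices of $P$ needed to represent $x$ from $|S|$ down to $n$. This will be done via Carath\'eodory's theorem.

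The key observation is that every element of $S$ has the same coordinate sum $s := \sum_{i=1}^{n} y_i$, so $S$ (and hence $P$) is contained in the affine hyperplane $H := \{z \in \mathbf{R}^n : \sum_i z_i = s\}$, which has dimension $n-1$. Applying Carath\'eodory's theorem within the affine hull of $S$, which is a subspace of $H$ of dimension at most $n-1$, any point of $P$ can be expressed as a convex combination of at most $(n-1) + 1 = n$ elements of $S$. Specializing to $x$ yields the required representation.

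The only place one must be careful is in applying Carath\'eodory in the correct dimension: the naive application in ambient $\mathbf{R}^n$ gives only the bound $n+1$, and it is precisely the affine constraint $\sum_i z_i = s$ that saves one vertex and gives the sharp bound $n$. Once this is noted, the argument is a direct combination of Rado's theorem with Carath\'eodory's theorem, and no further ingredients are needed.
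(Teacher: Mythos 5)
Your argument is correct and is exactly the one the paper relies on: Rado's theorem gives membership in the convex hull of the permutations of $y$, and Zhan's refinement is precisely the application of Carath\'eodory's theorem inside the hyperplane $\sum_i z_i = \sum_i y_i$ of dimension $n-1$, which improves the naive bound $n+1$ to $n$. No discrepancy with the paper's approach.
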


\section{Convex Hulls of Grassmannians}

Let $\textbf{F} = \mathbf{R}$ or $\mathbf{C}$. The Grassmannian of $k$-dimensional subspaces of $\mathbf{F}^{n}$ is denoted by $G_{k}(\mathbf{F}^{n})$. We denote by $FM(n)$ the space of real symmetric matrices of order $n$ if $\mathbf{F} = \mathbf{R}$, or the space of Hermitian matrices of order $n$ if $\mathbf{F} = \mathbf{C}$. If $A \in FM(n)$ is positive semidefinite, we write $A \succeq 0$. It is known that the Grassmannian $G_{k}(\mathbf{F}^{n})$ can be identified with set of projectors in $FM(n)$:
\begin{equation}
\label{identification}
G_{k}(\mathbf{F}^{n}) \cong \{ A \in FM(n) \mid A^{2} = A, \: \mathrm{tr}A = k \}.
\end{equation}
This embedding is isometric with respect to the canonical metrics on Grassmannian and $FM(n)$ (see \cite{Dimitric} for details). Hereinafter, we identify the both sides of the equation $(\ref{identification})$. We set
\begin{equation}
\mathcal{H}_{k}(\mathbf{F}^{n}) := \{ A \in FM(n) \mid A \succeq 0,\:  I_{n}-A \succeq 0, \: \mathrm{tr} A = k \}.
\end{equation}
One can immediately see that $\mathcal{H}_{k}(\mathbf{F}^{n})$ is a convex subset of $FM(n)$. A point of a convex body $C \subset \mathbf{R}^{N}$ is called \textit{extreme} if it is not an interior point of any line segments in $C$. The set of extreme points of $C$ is denoted by $\mathrm{ext}C$. Fillmore and Williams \cite{FW} proved the following proposition: 
\begin{prop}
The Grassmannian $G_{k}(\mathbf{F}^{n})$ is the set of extreme points of $\mathcal{H}_{k}(\mathbf{F}^{n})$, that is, $G_{k}(\mathbf{F}^{n}) = \mathrm{ext}\mathcal{H}_{k}(\mathbf{F}^{n})$.
\end{prop}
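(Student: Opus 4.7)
The plan is to prove both inclusions $G_k(\mathbf{F}^n) \subseteq \mathrm{ext}\,\mathcal{H}_k(\mathbf{F}^n)$ and $\mathrm{ext}\,\mathcal{H}_k(\mathbf{F}^n) \subseteq G_k(\mathbf{F}^n)$ separately; the second inclusion is where Rado's theorem (Theorem \ref{Rado}) enters.

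For the first inclusion, I would take a projector $P \in G_k(\mathbf{F}^n)$ and suppose $P = tB + (1-t)C$ with $0 < t < 1$ and $B, C \in \mathcal{H}_k(\mathbf{F}^n)$. For any unit $v \in \mathrm{range}(P)$, we have $\langle Pv,v\rangle = 1 = t\langle Bv,v\rangle + (1-t)\langle Cv,v\rangle$; since $B, C \leq I_n$, both inner products equal $1$, so $\langle (I_n - B)v, v\rangle = 0$, and positive semidefiniteness of $I_n - B$ forces $Bv = v$, and analogously $Cv = v$. A symmetric argument on $\ker(P)$, using $B, C \geq 0$, shows $Bv = Cv = 0$ there. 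Since $\mathbf{F}^n = \mathrm{range}(P) \oplus \ker(P)$, we conclude $B = C = P$, so $P$ is extreme.

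For the second inclusion, I would take an arbitrary $A \in \mathcal{H}_k(\mathbf{F}^n)$ and diagonalize $A = U D U^{*}$ with $D = \mathrm{diag}(\lambda_1,\ldots,\lambda_n)$, $0 \leq \lambda_i \leq 1$, $\sum_i \lambda_i = k$. Let $e_k := (1,\ldots,1,0,\ldots,0) \in \mathbf{R}^n$ with $k$ ones. Because each $\lambda_i \leq 1$ and the total is $k$, one checks $\sum_{i=1}^m \lambda_{[i]} \leq \min(m,k) = \sum_{i=1}^m (e_k)_{[i]}$, so $\lambda \preceq e_k$. By Theorem \ref{Rado}, $\lambda = \sum_{j=1}^{N} t_j\, \sigma_j(e_k)$ with $N \leq n$, $t_j > 0$, $\sum t_j = 1$, and permutations $\sigma_j$. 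Setting $P_j := U \,\mathrm{diag}(\sigma_j(e_k))\, U^{*}$, each $P_j$ is a rank-$k$ orthogonal projector in $G_k(\mathbf{F}^n)$, and $A = \sum_j t_j P_j$. If $A$ is not a projector, some $\lambda_i \in (0,1)$, so $\lambda$ is not a $0$-$1$ vector and the decomposition must involve at least two distinct vectors $\sigma_j(e_k)$, yielding at least two distinct $P_j$; hence $A$ is a nontrivial convex combination of elements of $\mathcal{H}_k(\mathbf{F}^n)$ and not extreme. Contrapositively, every extreme point of $\mathcal{H}_k(\mathbf{F}^n)$ is a projector.

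The main obstacle, conceptually, is arranging the right majorization comparison so that Rado's theorem produces a convex decomposition whose summands lie in $G_k(\mathbf{F}^n)$ rather than merely in $\mathcal{H}_k(\mathbf{F}^n)$: this is precisely what diagonalizing $A$ in an orthonormal basis accomplishes, since conjugation by the unitary $U$ transports diagonal $0$-$1$ projectors to arbitrary rank-$k$ projectors. A minor technical point is checking that the distinct permutations $\sigma_j(e_k)$ actually yield distinct matrices $P_j$, which follows because $U$ is invertible, so the decomposition of $A$ is genuinely nontrivial whenever $A$ has a non-integer eigenvalue.
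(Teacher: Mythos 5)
Your proof is correct, but note that the paper does not actually prove this proposition: it is quoted from Fillmore--Williams \cite{FW}, and the paper's own contribution is a direct Rado-based proof of its consequence, Theorem \ref{convex-hull} (that the convex hull of $G_{k}(\mathbf{F}^{n})$ is $\mathcal{H}_{k}(\mathbf{F}^{n})$). Your argument for the inclusion $\mathrm{ext}\,\mathcal{H}_{k}(\mathbf{F}^{n}) \subseteq G_{k}(\mathbf{F}^{n})$ is essentially that same proof --- diagonalize $A$, check that the eigenvalue vector is majorized by $(1,\dots,1,0,\dots,0)$, apply Theorem \ref{Rado}, and conjugate the resulting diagonal $0$-$1$ projectors back by $U$ --- recast in contrapositive form: a non-projector forces at least two distinct permutations to appear in the Rado decomposition, hence $A$ is a nontrivial convex combination. (To make the last step fully explicit, group the terms as $A = t_{1}P_{1} + (1-t_{1})Q$ with $Q \in \mathcal{H}_{k}(\mathbf{F}^{n})$; then $Q \neq P_{1}$ precisely because $A$ is not a projector, so $A$ is interior to a genuine segment. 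This is standard and not a gap.) The reverse inclusion $G_{k}(\mathbf{F}^{n}) \subseteq \mathrm{ext}\,\mathcal{H}_{k}(\mathbf{F}^{n})$ --- that projectors are extreme --- is not proved anywhere in the paper; your direct argument, using $\langle (I_{n}-B)v, v\rangle = 0 \Rightarrow Bv = v$ on the range of $P$ and the dual statement with $B \geq 0$ on the kernel, is the standard one and is sound. In short, your route gives a self-contained verification of the cited proposition built on the paper's own majorization machinery, whereas the paper takes the proposition as known from \cite{FW}, derives Theorem \ref{convex-hull} from it by Krein--Milman, and then supplies the Rado argument as an independent proof of that theorem only.
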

The classical Krein--Milman theorem says that the convex hull of the extreme points of a convex set coincides with the convex set itself. Hence by the theorem, the above proposition implies the following theorem:
\begin{theo}\emph{(\cite{FW})}
\label{convex-hull}
The convex hull of $G_{k}(\mathbf{F}^{n})$ in $FM(n)$ is $\mathcal{H}_{k}(\mathbf{F}^{n})$.
\end{theo}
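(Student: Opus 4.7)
The plan is to deduce the theorem from the preceding Proposition via the Krein--Milman theorem. First I would verify the preliminary topological and algebraic facts: $\mathcal{H}_{k}(\mathbf{F}^{n})$ is a closed, bounded, and convex subset of the finite-dimensional real vector space $FM(n)$, hence compact. Closedness is immediate from the definition (intersection of the closed conditions $A \geq 0$, $I_{n} - A \geq 0$, and $\mathrm{tr}A = k$); boundedness follows from the eigenvalue constraint $0 \leq \lambda_{i}(A) \leq 1$ on each $A \in \mathcal{H}_{k}(\mathbf{F}^{n})$, which controls $\|A\|$; and convexity is clear because each defining condition is preserved under convex combinations.

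Second, for the easy inclusion $\mathrm{conv}(G_{k}(\mathbf{F}^{n})) \subseteq \mathcal{H}_{k}(\mathbf{F}^{n})$, I would note that any rank-$k$ projector $P$ satisfies $P \geq 0$, $I_{n} - P \geq 0$, and $\mathrm{tr}P = k$, so $G_{k}(\mathbf{F}^{n}) \subseteq \mathcal{H}_{k}(\mathbf{F}^{n})$; convexity of the target then upgrades this to the convex hull.

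Third, for the reverse inclusion I would invoke Krein--Milman (in finite dimensions, Minkowski's theorem): a compact convex subset of a finite-dimensional vector space equals the convex hull of its extreme points. Combining this with the preceding Proposition, which identifies $\mathrm{ext}\,\mathcal{H}_{k}(\mathbf{F}^{n}) = G_{k}(\mathbf{F}^{n})$, yields $\mathcal{H}_{k}(\mathbf{F}^{n}) = \mathrm{conv}(G_{k}(\mathbf{F}^{n}))$.

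The substantive content sits entirely in the preceding Proposition, so there is no real obstacle here; the present theorem is a routine corollary once compactness and the trivial inclusion are in hand. Were one to bypass the Proposition, the natural alternative would be a direct argument using Theorem \ref{Rado}: spectrally decompose $A = U \Lambda U^{*}$ with eigenvalues $\lambda_{i} \in [0,1]$ summing to $k$, observe that $(\lambda_{1}, \ldots, \lambda_{n}) \preceq (1, \ldots, 1, 0, \ldots, 0)$ (with exactly $k$ ones), and apply Rado's theorem to express $\Lambda$ as a convex combination of permutations of this diagonal $0/1$ matrix; conjugating by $U$ then exhibits $A$ as a convex combination of at most $n$ rank-$k$ projectors, which is the reverse inclusion without appeal to Krein--Milman.
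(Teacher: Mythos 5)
Your proposal is correct, but your primary argument is not the paper's proof: it is the classical deduction that the paper records immediately before the theorem statement (the Proposition identifying $\mathrm{ext}\,\mathcal{H}_{k}(\mathbf{F}^{n}) = G_{k}(\mathbf{F}^{n})$, combined with Krein--Milman, or Minkowski's theorem in finite dimensions) and then deliberately sets aside in favor of a new argument. Your verification of compactness, convexity, and the easy inclusion $G_{k}(\mathbf{F}^{n}) \subseteq \mathcal{H}_{k}(\mathbf{F}^{n})$ is fine and the logic is sound; the cost is that all the substance is pushed into the Proposition, which the paper only cites from \cite{FW} without proof, so this route reproduces the known derivation rather than the paper's contribution. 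The paper's actual proof is precisely the argument you relegate to your final sentences: diagonalize $X \in \mathcal{H}_{k}(\mathbf{F}^{n})$ by a unitary (or orthogonal) $U$, observe that the eigenvalue vector $r$ satisfies $r \preceq (1,\dots,1,0,\dots,0)$ with $k$ ones, apply Theorem \ref{Rado} to write the diagonal part as a convex combination of at most $n$ coordinate permutations of that vector (each of which is a diagonal rank-$k$ projector $A_{i_{1},\dots,i_{k}}$), and conjugate back by $U^{-1}$ to exhibit $X$ as a convex combination of elements of $G_{k}(\mathbf{F}^{n})$; your sketch matches this essentially line for line. What the Rado route buys is independence from the extreme-point characterization, an explicit decomposition with the quantitative bound of at most $n$ projectors, and the paper's stated purpose of giving a new path from Theorem \ref{birkhoff} to Fan's theorem; what your Krein--Milman route buys is brevity, at the price of assuming the harder half of the Fillmore--Williams result.
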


We give another proof of this theorem by using Theorem \ref{Rado}.

\begin{proof}
Each element $A \in G_{k}(\mathbf{F}^{n})$ satisfies $A^{2}=A$ and $\mathrm{tr}A = k$. This implies that the eigenvalues of $A$ are $1$ and $0$ with multiplicity $k$ and $n-k$ respectively. Hence it follows that $G_{k}(\mathbf{F}^{n}) \subset \mathcal{H}_{k}(\mathbf{F}^{n}).$ Since $\mathcal{H}_{k}(\mathbf{F}^{n})$ is convex, it suffices to show that each element $X \in \mathcal{H}_{k}(\mathbf{F}^{n})$ can be expressed as a convex combination of elements of $G_{k}(\mathbf{F}^{n}).$ For any $X \in \mathcal{H}_{k}(\mathbf{F}^{n})$, there exists $U$ such that
\begin{equation}
\label{r}
UXU^{-1} = \mathrm{diag}(r_{1}, \ldots, r_{n}), \quad 1 \geq r_{1} \geq r_{2} \geq \cdots \geq r_{n} \geq 0, \quad \sum_{i=1}^{n}r_{i} = k,
\end{equation}
where $U$ is an orthogonal matrix if $\mathbf{F} = \mathbf{R}$, or a unitary matrix if $\mathbf{F} = \mathbf{C}.$ For any $k$-tuple $(i_{1}, \ldots ,i_{k})$ with $1 \leq i_{1} < \cdots < i_{k} \leq n$, we denote by $A_{ i_{1} \cdots i_{k} }$ the diagonal matrix that has units in the $i_{j}$-th diagonal entries $(j=1, 2, \ldots ,k)$ and zeros in the others. Clearly, we have $A_{ i_{1}, \cdots i_{k} } \in G_{k}(\mathbf{F}^{n})$. Let $r:= (r_{1}, \ldots , r_{n}) \in \mathbf{R}^{n}$. Consider 
\begin{equation*}
x := (1, \ldots ,1, 0, \ldots 0) \in \mathbf{R}^{n},
\end{equation*}
where the number of $1$'s is $k$ and that of $0$'s is $n-k$. Then (\ref{r}) implies $r \preceq x$. Hence Theorem \ref{Rado} implies that $r$ can be written as a convex combination of permutations of $x$. Thus it follows that there exist $\{c_{ i_{1}, \cdots i_{k} } \geq 0 \mid 1 \leq i_{1} < \cdots < i_{k} \leq n \}$ such that
\begin{equation*}
UXU^{-1} = \mathrm{diag}(r_{1}, \cdots, r_{n}) = \sum_{ 1 \leq i_{1} < \cdots < i_{k} \leq n } c_{ i_{1} \cdots i_{k} } A_{i_{1} \cdots i_{k} }
\end{equation*}
and 
\begin{equation*}
\sum_{ 1 \leq i_{1} < \cdots < i_{k} \leq n } c_{ i_{1} \cdots i_{k} } =1.
\end{equation*}
Since we have $U^{-1} A_{i_{1} \cdots i_{k} } U \in G_{k}(\mathbf{F}^{n})$, $X$ can be written as a convex combination of elements in $G_{k}(\mathbf{F}^{n})$. This completes the proof.
\end{proof}

\section{Symmetric Hypermatrices}

Let $\mathbf{N}$ be the set of positive integers. For $n \in \mathbf{N}$, we set $\langle n \rangle := \{ 1, 2, \ldots, n\}$. Let $d \in \mathbf{N}$. For $n_{1}, \ldots, n_{d}$, a (real) \textit{d-hypermatrix} is defined to be a function $A: \langle n_{1} \rangle \times \cdots \times \langle n_{d} \rangle \rightarrow \mathbf{R}$. A 2-hypermatrix is just an ordinary matrix. A value $A(i_{1}, \ldots, i_{d})$ is called an \textit{entry}. 
\textbf{ In this paper, we only consider the case where $n_{1} = \cdots = n_{d} = n$ }. By fixing one index $i_{k}$ to be $l$, one gets a $(d-1)$-dimensional subarray $\{A(i_{1}, \ldots, i_{k-1}, l, i_{k+1}, \ldots, i_{d}) \mid i_{1}, \ldots i_{k-1}, i_{k+1}, \ldots, i_{d} \in \langle n \rangle \}$. This $(d-1)$-dimensional subarray is called a \textit{slice} of the $d$-hypermatrix with respect to $i_{k} = l$. A slice itself can be regarded as a $(d-1)$-hypermatrix. We set
\begin{equation*}
I_{k} := \left\{ (k, i_{2}, \ldots, i_{d} ) \in \langle n \rangle^{d} \mid i_{2} < \cdots < i_{d}, \quad  i_{2}, \ldots ,i_{d} \in \langle n \rangle \setminus \{k\} \right\}
\end{equation*}
for each $k \in \langle n \rangle$.

A $d$-hypermatrix $A$ is called \textit{nonnegative} if each entry of $A$ is nonnegative. Let $\mathfrak{S}_{d}$ be the symmetric group of degree $d$. A $d$-hypermatrix $A$ is called \textit{symmetric} if the equation
\begin{equation*}
A(i_{\pi(1)}, \ldots, i_{\pi(d)}) = A(i_{1}, \ldots, i_{d})
\end{equation*}
holds for any $(i_{1}, \ldots, i_{d}) \in \langle n \rangle^{d}$ and any $\pi \in \mathfrak{S}_{d}$. A $d$-hypermatrix $A$ is defined to be \textit{strongly hollow} if every entry $A(i_{1}, \ldots, i_{d})$ that has two identical indices $i_{k} = i_{l}$ $(k \neq l)$ is zero. A strongly hollow 2-hypermatrix is just an ordinary hollow matrix, i.e. a square matrix whose diagonal entries are all equal to zero.

Consider $R = (r_{1}, \ldots, r_{n}) \in \mathbf{R}^{n}$ with $r_{1} \geq \cdots \geq r_{n} \geq 0$. We denote by $\mathcal{N}_{0}(d; R)$ the set of strongly hollow symmetric nonnegative $d$-hypermatrices whose sum of all the entries in each slice with respect to $i_{1} = j$ $(j \in \langle n \rangle )$ is equal to $r_{j}$. Note that $i_{1}$ in the above definition can be replaced by any fixed $i_{k}$ $(k \in \langle n \rangle)$ since we consider a symmetric hypermatrix. A strongly hollow symmetric nonnegative $d$-hypermatrix $A$ belongs to  $\mathcal{N}_{0}(d; R)$ if and only if the equation
\begin{equation}
\label{slice-sum}
(d-1)! \sum_{\mathbf{i} \in I_{k} } A(\mathbf{i}) = r_{k}
\end{equation}
holds for each $k \in \langle n \rangle$.

The set $\mathcal{N}_{0}(d; R)$ might be empty. For example, $\mathcal{N}_{0}(d; R)$ is obviously empty when $n=1$ and $R = (1)$. One can easily verify the emptiness also when $n=2$ and $R=(2,1)$. Hence it is natural to look for the condition that the set $\mathcal{N}_{0}(d; R)$ is nonempty. Using a similar argument to that in the proof of Theorem \ref{convex-hull}, one can prove the following theorem.

\begin{theo}
\label{Hakimi}
Let $d, n \in \mathbf{N}$ with $d \geq 2$ and $n \geq d$. Let $R = (r_{1}, \ldots, r_{n})$ be a vector with $r_{1} \geq \cdots \geq r_{n} \geq 0$. Then the set $\mathcal{N}_{0}(d; R)$ is nonempty if and only if the inequality
\begin{equation}
\label{condition}
dr_{1} \leq \sum_{i=1}^{n}r_{i}
\end{equation}
holds. Moreover, if this holds, then there exists a $d$-hypermatrix in $\mathcal{N}_{0}(d; R)$ the number of whose nonzero entries is at most $n \cdot d!$. 
\end{theo}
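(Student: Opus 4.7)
The plan mirrors the proof of Theorem~\ref{convex-hull}: reformulate $\mathcal N_0(d;R)$ in terms of weights on $d$-subsets of $\langle n\rangle$, and then apply Rado's theorem (Theorem~\ref{Rado}) to produce the required decomposition.

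First I would exploit the symmetry and strong off-diagonality: any $A\in\mathcal N_0(d;R)$ is determined by a single nonnegative scalar $a_S$ for each $d$-element subset $S\subset\langle n\rangle$, namely the common value of $A$ on the $d!$ orderings of $S$. The slice-sum relation~(\ref{slice-sum}) then reads $(d-1)!\sum_{S\ni k}a_S=r_k$ for every $k$. Summing over $k$ and using that each $d$-subset is counted $d$ times yields $\sum_k r_k=d!\sum_S a_S$, from which necessity of~(\ref{condition}) is immediate: $r_1=(d-1)!\sum_{S\ni 1}a_S\le(d-1)!\sum_S a_S=(1/d)\sum_k r_k$.

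For sufficiency, set $s_k:=r_k/(d-1)!$ and $T:=(\sum_k s_k)/d$. Building $A$ reduces to finding nonnegative weights $c_S$ with $\sum_S c_S=T$ and $\sum_{S\ni k}c_S=s_k$, since one can then take $A:=\sum_S c_S A_S$, where $A_S$ denotes the strongly off-diagonal symmetric $\{0,1\}$-hypermatrix supported on the $d!$ orderings of $S$. Normalizing $\tilde c_S:=c_S/T$, this is exactly the statement that the vector $s/T$ is a convex combination of the indicator vectors $\chi_S\in\mathbf R^n$ of $d$-subsets --- equivalently, of permutations of $x:=(\underbrace{1,\ldots,1}_{d},\underbrace{0,\ldots,0}_{n-d})$. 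So by Theorem~\ref{Rado} it suffices to check the majorization $s/T\preceq x$, after which at most $n$ of the weights $\tilde c_S$ can be taken nonzero.

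The sum condition $\sum_k(s/T)_k=d$ holds by construction, partial sums of $s/T$ of length greater than $d$ are trivially bounded by $d$, and for $k\le d$ the partial sum is bounded by $k(s/T)_1$, so the whole majorization collapses to the single inequality $(s/T)_1\le 1$ --- which unwinds to $dr_1\le\sum_k r_k$, precisely the hypothesis~(\ref{condition}). Since each $A_S$ contains exactly $d!$ nonzero entries and at most $n$ of the $c_S$ are nonzero, the resulting $A$ has at most $n\cdot d!$ nonzero entries. The only genuinely non-routine step is the translation from the hypermatrix existence problem to majorization of $s/T$ by the $\{0,1\}$-vector $x$; once that reduction is in hand the argument follows the Grassmannian case almost verbatim, with $d$-subsets playing the role of the diagonal projectors $A_{i_1,\ldots,i_k}$.
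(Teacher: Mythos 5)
Your proposal is correct and follows essentially the same route as the paper: normalize the row-sum vector so that it is majorized by $(1,\ldots,1,0,\ldots,0)$ with $d$ ones, apply Theorem \ref{Rado} to write it as a convex combination of at most $n$ indicator vectors of $d$-subsets, and symmetrize and rescale to obtain the hypermatrix; your verification of the majorization and your cleaner double-counting argument for necessity are just more explicit versions of what the paper does.
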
 

For $d=2$, the theorem was proved by Hakimi \cite{Hakimi} in a graph-theoretic way.

\begin{proof}
We assume $A \in \mathcal{N}_{0}(d; R).$  Then, the equation (\ref{slice-sum}) holds for each $k \in \langle n \rangle$. Fix $2 \leq i_{2} <i_{3} < \cdots <i_{d} \leq n$. The symmetry of $A$ implies 
\begin{equation}
\label{1-slice}
\begin{split}
&(d-1)\cdot A(1, i_{2}, \ldots, i_{d}) \\
&= A(i_{2}, 1, i_{3}, \ldots, i_{d}) + A(i_{3}, 1, i_{2}, i_{4}\ldots, i_{d}) + \cdots +A(i_{d}, 1, i_{2}, \cdots, i_{d-1}). \\
\end{split}
\end{equation}
Each of the ordered $d$-tuples $(i_{2}, 1, i_{3}, \ldots, i_{d}), (i_{3}, 1, i_{2}, i_{4}, \ldots, i_{d}), \ldots, (i_{d}, 1, i_{2}, \ldots, i_{d-1})$ belongs to one of $I_{2}, \ldots, I_{n}$. Since only the elements $\{ A(i_{1}, \ldots, i_{d}) \mid (i_{1}, \ldots, i_{d}) \in \cup_{j=2}^{n} I_{j}, \:  i_{l} = 1 \: \mbox{for some} \: l \}$ can appear on the right-hand side of (\ref{1-slice}), we have
\begin{equation*}
0 \leq r_{2}+ \cdots +r_{n} - (d-1)r_{1} ,
\end{equation*}
and so the inequality (\ref{condition}).

We show the converse. Set 
\begin{equation*}
S = \frac{1}{d}\sum_{i=1}^{n} r_{i}.
\end{equation*}
We know $r_{1} \leq S$ by the assumption. Consider a vector
\begin{equation*}
R' := \left( \frac{r_{1}}{S}, \ldots , \frac{r_{n}}{S} \right). 
\end{equation*}
For $1 \leq i_{1} <i_{2}< \cdots <i_{d} \leq n$, we denote by $v_{i_{1}i_{2}\cdots i_{d}}$ the vector in $\mathbf{R}^{n}$ whose $i_{1}, i_{2}, \ldots, i_{d}$ entries are equal to $1$ and the other entries are equal to $0$. Now we have
\begin{equation*}
R' \preceq v_{12\cdots d} = (1, \ldots 1, 0, \ldots ,0),
\end{equation*}
where the number of $1$'s is $d$ and that of $0$'s is $n-d$. Hence Theorem \ref{Rado} implies that there exist nonnegative constants $\{ c_{i_{1}i_{2}\cdots i_{d} } \geq 0 \mid 1 \leq i_{1} <i_{2}< \cdots <i_{d} \leq n \}$ such that 

\begin{equation*}
\sum_{1 \leq i_{1} <i_{2}< \cdots <i_{d} \leq n } c_{i_{1}i_{2}\cdots i_{d} }v_{i_{1}i_{2}\cdots i_{d}}  = R' \quad  \mbox{and} \quad \sum_{1 \leq i_{1} <i_{2}< \cdots <i_{d} \leq n } c_{i_{1}i_{2}\cdots i_{d} }=1.
\end{equation*}
 Then for arbitrary $k \in \langle n \rangle$, we obtain
\begin{equation*}
\sum_{l=1}^{d} \sum_{\substack{1 \leq i_{1} <i_{2}< \cdots i_{l-1} \\ < i_{l}= k  <i_{l+1}<\cdots <i_{d} \leq n } } c_{i_{1}i_{2}\cdots i_{d} } = \frac{r_{k}}{S}.
\end{equation*}
From the set $\{ c_{i_{1}i_{2}\cdots i_{d} } \geq 0 \mid 1 \leq i_{1} <i_{2}< \cdots <i_{d} \leq n \}$, one can naturally construct a strongly hollow symmetric $d$-hypermatrix $(\tilde{c}_{i_{1}i_{2}\cdots i_{d} })$ satisfying 
\begin{equation*}
\tilde{c}_{\pi(i_{1})\pi(i_{2})\cdots \pi( i_{d} ) } =  c_{i_{1}i_{2}\cdots i_{d} } \quad \mbox{for any} \quad \pi \in \mathfrak{S}_{d}.
\end{equation*} 
Since each $c_{i_{1}i_{2}\cdots i_{d} }$ is nonnegative, the hypermatrix $(\tilde{c}_{i_{1}i_{2}\cdots i_{d} })$ is nonnegative. Then for each $k \in \langle n \rangle$, we have
\begin{equation*}
\sum_{j_{1}, \cdots ,j_{d-1} \in \langle n \rangle \setminus {k} }  \tilde{c}_{kj_{1}\cdots j_{d-1}} = (d-1)!  \sum_{l=1}^{d} \sum_{\substack{1 \leq i_{1} <i_{2}< \cdots i_{l-1} \\ < i_{l}= k  <i_{l+1}<\cdots <i_{d} \leq n } } c_{i_{1}i_{2}\cdots i_{d} }= \frac{(d-1)!r_{k}}{S}.
\end{equation*}
Thus we conclude 
\begin{equation*}
\left( \frac{S}{(d-1)!}  \tilde{c}_{i_{1}i_{2}\cdots i_{d} } \right) \in \mathcal{N}_{0}(d; R).
\end{equation*}

The second half of the theorem follows from Theorem \ref{Rado} and the symmetry of $\left( \frac{S}{(d-1)!}  \tilde{c}_{i_{1}i_{2}\cdots i_{d} } \right)$.
 
\end{proof} 

In the following lemma, we show that if the set $\mathcal{N}_{0}(d; R)$ is nonempty for given $d \geq 2$ and $R \in \mathbf{R}^{n}$, it is a convex polytope, possibly a singleton.

\begin{lemm}
\label{polytope}
Let $d, n \in \mathbf{N}$ with $d \geq 2$ and $n \geq d$. Let $R = (r_{1}, \ldots, r_{n})$ be a vector with $r_{1} \geq \cdots \geq r_{n} \geq 0$. Assume that the set $\mathcal{N}_{0}(d; R)$ is nonempty. Then the set $\mathcal{N}_{0}(d; R)$ is a convex polytope for $n \geq d+2$, and a singleton for $n = d$ or $d+1$. 
\end{lemm}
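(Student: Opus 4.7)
The plan is to verify compactness and polyhedral structure directly from the definition in all cases, then handle $n=d$ and $n=d+1$ by observing that the slice-sum equations become a linear system with a unique solution.

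First, I would point out that in every case $\mathcal{N}_0(d;R)$ is carved out of the finite-dimensional vector space of $d$-hypermatrices by finitely many linear equalities (the symmetry relations, the strongly off-diagonal condition $A(\mathbf{i})=0$ whenever $\mathbf{i}$ has a repeated coordinate, and the slice-sum equation (\ref{slice-sum})) together with the linear inequalities $A(\mathbf{i})\ge 0$. So the set is a closed convex polyhedron. Boundedness is immediate, since nonnegativity combined with (\ref{slice-sum}) gives $A(\mathbf{i})\le r_{i_1}/(d-1)!$ for every entry. A closed and bounded subset of a finite-dimensional real vector space is compact, hence the set is a compact convex polytope in every case.

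For $n=d$, a strongly off-diagonal symmetric hypermatrix vanishes outside the permutations of $(1,2,\dots,d)$, and by symmetry takes a single common value $c$ on these. For each $k\in\langle d\rangle$ the set $I_k$ consists of the single tuple obtained by writing $\langle d\rangle\setminus\{k\}$ in increasing order, so (\ref{slice-sum}) reduces to $(d-1)!\,c=r_k$. This forces $r_1=\cdots=r_n=(d-1)!\,c$, and $c$ is then determined. Hence, whenever $\mathcal{N}_0(d;R)$ is nonempty, it consists of this one hypermatrix.

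For $n=d+1$, the $d$-subsets of $\langle d+1\rangle$ are $S_j:=\langle d+1\rangle\setminus\{j\}$, and symmetry reduces the data of $A$ to a vector $y=(y_1,\dots,y_{d+1})$, where $y_j$ is the common value of $A$ on permutations of $S_j$. A short count shows that the slice of index $k$ contains exactly one representative from each $S_j$ with $j\neq k$, so (\ref{slice-sum}) becomes $(d-1)!\sum_{j\neq k}y_j=r_k$, i.e., the linear system $(d-1)!(J-I)y=R$, where $J$ is the all-ones matrix and $I$ the identity of order $d+1$. The eigenvalues of $J-I$ are $d$ and $-1$ (with multiplicities $1$ and $d$), so $J-I$ is invertible, and $y$ is uniquely determined. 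Nonnegativity is then an a posteriori consequence of the assumption that $\mathcal{N}_0(d;R)$ is nonempty, and the set is a singleton. The only slightly subtle step is this identification of the incidence matrix with $J-I$ and its invertibility; everything else is bookkeeping about the index sets $I_k$.
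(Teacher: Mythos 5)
Your proof is correct and follows essentially the same route as the paper's: reduce the slice-sum constraints to a finite linear system, note that nonnegativity plus the slice sums force boundedness (hence a compact convex polytope), and observe that for $n=d$ and $n=d+1$ the system is square with an invertible coefficient matrix, so the solution --- nonnegative by the nonemptiness hypothesis --- is unique. Your uniform bound $A(\mathbf{i})\le r_{i_{1}}/(d-1)!$ and the identification of the $n=d+1$ coefficient matrix with $(d-1)!\,(J-I)$ are somewhat cleaner than the corresponding steps in the paper (which invokes Theorem \ref{Hakimi} to get $r_{1}=\cdots=r_{d}$ in the case $n=d$, and solves the $n=d+1$ system explicitly to exhibit the unique element), but the underlying argument is the same.
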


\begin{proof}
First we consider the case where $n=d$. The assumption on $R$ and Theorem \ref{Hakimi} imply 
\begin{equation*}
dr_{1} \leq \sum_{i=1}^{d}r_{i} \leq dr_{1}
\end{equation*}
and so $r_{1} = \cdots =r_{d}$. Then the strongly hollow symmetric $d$-hypermatrix $A$ given by
\begin{equation}
\label{singleton2}
A( \pi(1), \ldots, \pi(d)) = A(1,2, \ldots, d)= \frac{r_{1}}{(d-1)!} \quad \mbox{for any} \quad \pi \in \mathcal{S}_{d}
\end{equation}
is the unique element in $\mathcal{N}_{0}(d; R)$.

Hereinafter, we consider the case where $n \geq d+1$. We rewrite the slice sum constraints (\ref{slice-sum}) as follows. Let $w$ be a vector in $\mathbf{R}^{\binom{n}{d}}$ obtained by aligning all the elements in $\{ A(i_{1}, \ldots, i_{d} ) \mid 1 \leq i_{1}< \cdots <i_{d} \leq n \}$ in an arbitrary order. For $1 \leq i_{1} <i_{2}< \cdots <i_{d} \leq n$, we denote by $v_{i_{1}i_{2}\cdots i_{d}}$ the vector in $\mathbf{R}^{n}$ whose $i_{1}, i_{2}, \ldots, i_{d}$ entries are equal to $1$ and the other entries are equal to $0$. We construct an $n \times \binom{n}{d}$ matrix $B$ by arranging the vectors $\{v_{i_{1}i_{2}\cdots i_{d}} \mid 1 \leq i_{1} <i_{2}< \cdots <i_{d} \leq n \}$ in the unique order so that if the $j$-th entry of $w$ is $A(i_{1}, \ldots, i_{d})$, then the $j$-th column of $B$ is $v_{i_{1}i_{2}\cdots i_{d}}$. Then the linear system (\ref{slice-sum}) is equivalent to
\begin{equation}
\label{slice-sum2}
Bw =
\frac{1}{(d-1)!} R^{T}. 
\end{equation}
Since each column of $B$ is a permutation of $v_{1 \cdots d}$ and we have $n \geq d+1$, one can easily check that the rank of $B$ is $n$. 

For $n=d+1$, the system (\ref{slice-sum2}) has $(d+1)$ equations and $(d+1)$ variables $\{A(1, \ldots, k-1, \check{k}, k+1, \ldots ,d+1) \mid 1 \leq k \leq d+1\}$, where $\check{k}$ denotes the omission of $k$. Therefore, the solutions $\{A(1, \ldots, k-1, \check{k}, k+1, \ldots ,d+1) \mid k \in \langle d+1 \rangle\}$ are uniquely determined. We find the solutions explicitly and show that they are nonnegative. Summing up all the equations in (\ref{slice-sum}), one obtains
\begin{equation}
\label{total}
\sum_{k=1}^{d+1}A(1, \ldots, k-1, \check{k}, k+1, \ldots ,d+1) = \frac{1}{d!}\sum_{i=1}^{d+1}r_{i}.
\end{equation}
For each $k \in \langle d+1 \rangle$, the left-hand side of the equation (\ref{slice-sum}) is the sum of all the variables except for $A(1, \ldots, k-1, \check{k}, k+1, \ldots ,d+1)$. Thus the difference between (\ref{total}) and (\ref{slice-sum}) gives
\begin{equation}
\label{singleton}
A(1, \ldots, k-1, \check{k}, k+1, \ldots ,d+1) = \frac{1}{d!}\left(\sum_{i=1}^{d+1}r_{i}\right) - \frac{r_{k}}{(d-1)!},
\end{equation}
which is nonnegative by the assumption (\ref{condition}). Hence the set $\mathcal{N}_{0}(d; R)$ is a singleton for $n=d+1$. 

Finally we consider the case where $n \geq d+2$. Then the system (\ref{slice-sum2}) has $n$ equations and $\binom{n}{d}$ variables. The assumption that $n \geq d+2 \geq 4$ implies 
\begin{equation*}
\binom{n}{d} \geq \frac{n(n-1)}{2} > n.
\end{equation*}
Hence the solution space of (\ref{slice-sum2}) is of dimension $\tbinom{n}{d} - n >0$. The assumption that a hypermatrix is nonnegative implies that the set $\mathcal{N}_{0}(d; R)$ is the intersection of half spaces and the solution space. Hence $\mathcal{N}_{0}(d; R)$ is a polytope of dimension $\tbinom{n}{d} - n$. Moreover, one can readily see that $\mathcal{N}_{0}(d; R)$ is a convex polytope. Thus the claim follows.
\end{proof}

In the above proof, we have described the unique element in $\mathcal{N}_{0}(d; R)$ in the cases where $\mathcal{N}_{0}(d; R)$ is a singleton. For $n=d$, (\ref{singleton2}) implies that the number of positive entries of the unique element in $\mathcal{N}_{0}(d; R)$ is exactly $d!$ unless that $r_{1} = \cdots = r_{d} =0$. For $n=d+1$, the number of positive entries of the unique element in $\mathcal{N}_{0}(d; R)$ is at most $(d+1)!$ by (\ref{singleton}). This gives another proof of the second half of Theorem \ref{Hakimi} when $\mathcal{N}_{0}(d; R)$ is a singleton.

Assume that $\mathcal{N}_{0}(d; R)$ is a nonempty convex polytope. In the following proposition, we show that an extreme point in $\mathcal{N}_{0}(d; R)$ must have at most $n \cdot d!$ positive entries. Since every convex polytope has an extreme point, the following proposition gives another proof of the second half of Theorem \ref{Hakimi} in case where $\mathcal{N}_{0}(d; R)$ is a convex polytope. The proposition is a generalization of the lemma in \cite{CK} and the proof is similar to the original one.

\begin{prop}
Let $d, n \in \mathbf{N}$ with $d \geq 2$ and $n \geq d$. Let $R = (r_{1}, \cdots, r_{n})$ be a vector with $r_{1} \geq \cdots \geq r_{n} \geq 0$. Assume that $\mathcal{N}_{0}(d; R)$ is a nonempty convex polytope. If $A \in \mathcal{N}_{0}(d; R)$ is an extreme point, then $A$ has at most $n \cdot d!$ positive entries.
\end{prop}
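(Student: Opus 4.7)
The plan is to invoke the reformulation from the proof of Lemma \ref{polytope}: by strong off-diagonality and symmetry, a hypermatrix $A \in \mathcal{N}_{0}(d;R)$ is uniquely encoded by the vector $w \in \mathbf{R}^{\binom{n}{d}}$ whose coordinates are the values $A(i_{1},\ldots,i_{d})$ for $1 \le i_{1} < \cdots < i_{d} \le n$, and under this linear bijection $\mathcal{N}_{0}(d;R)$ corresponds to the polytope
\[
P := \Bigl\{\, w \in \mathbf{R}^{\binom{n}{d}} : w \ge 0,\ Bw = \tfrac{1}{(d-1)!}\,R^{T}\,\Bigr\},
\]
where $B$ is the $n \times \binom{n}{d}$ matrix introduced there. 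Since this correspondence is affine, extreme points of $\mathcal{N}_{0}(d;R)$ correspond bijectively to extreme points of $P$, so it suffices to bound the support size of an extreme point of $P$.

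Next, I would invoke the standard characterization of extreme points of a standard-form polytope: if $w$ is extreme in $P$, then the columns of $B$ indexed by the support of $w$ are linearly independent. The argument is by contraposition. Given a nonzero vector $v$ supported on the positive coordinates of $w$ with $Bv = 0$, both $w \pm \epsilon v$ lie in $P$ for all sufficiently small $\epsilon > 0$: the perturbation preserves the equality constraints because $Bv = 0$, it preserves nonnegativity on the support of $w$ since $w > 0$ there, and it preserves the zero coordinates of $w$ since $v$ vanishes on them. This exhibits $w$ as the midpoint of a nontrivial segment in $P$, contradicting extremality.

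Since $B$ has only $n$ rows and therefore rank at most $n$, the support of an extreme point $w$ has cardinality at most $n$. Each such positive coordinate corresponds to an unordered $d$-subset $\{j_{1},\ldots,j_{d}\} \subset \langle n \rangle$, and by symmetry $A$ takes the same positive value on all $d!$ orderings of this subset, while strong off-diagonality forbids any other nonzero entries. Hence $A$ has at most $n \cdot d!$ positive entries.

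There is no deep obstacle here; the result is a routine application of the standard-form extreme-point characterization, in line with the remark that the statement generalizes the lemma in \cite{CK}. The only bookkeeping that requires care is the reduction to $P$ (so that extremality and support are read off on $w$ rather than on $A$) and the multiplicative factor $d!$ incurred when translating the support of $w$ back into positive entries of the symmetric hypermatrix $A$.
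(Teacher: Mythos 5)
Your proposal is correct and follows essentially the same route as the paper: both encode $A$ as a nonnegative solution of the linear system with coefficient vectors $v_{i_{1}\cdots i_{d}} \in \mathbf{R}^{n}$, apply the standard characterization of extreme points of a standard-form polytope (which the paper cites from Gass and you reprove in the needed direction), bound the support by $n$ since the vectors live in $\mathbf{R}^{n}$, and multiply by $d!$ using symmetry. The only difference is that you spell out the perturbation argument and the affine identification with the polytope $P$ explicitly, which the paper leaves implicit.
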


\begin{proof}
$A \in \mathcal{N}_{0}(d; R)$ corresponds to a solution of the linear system
\begin{equation}
\label{linear}
(d-1)!  \sum_{1 \leq i_{1} < \cdots <i_{d} \leq n} A(i_{1}, \ldots ,i_{d}) v_{i_{1}, \ldots, i_{d}} = R^{T}, \quad A(i_{1}, \ldots ,i_{d}) \geq 0,
\end{equation}
where each $v_{i_{1}, \ldots, i_{d}}$ is the vector in $\mathbf{R}^{n}$ that has 1's in $i_{1}$-th, $i_{2}$-th, $\ldots$ and $i_{d}$-th entries and 0's elsewhere. It is known that a solution of (\ref{linear}) is extreme if and only if the vectors $\{ v_{i_{1}, \ldots, i_{d}} \mid A(i_{1}, \cdots ,i_{d}) > 0\}$ are linearly independent (see \cite[Theorem 5 on p.\,55]{Gass}, for example). Hence the cardinality of $\{ A(i_{1}, \ldots ,i_{d}) > 0 \mid 1 \leq i_{1} < \cdots <i_{d} \leq n\}$ is at most $n$. Thus the claim follows from the symmetry of $A$.
\end{proof}

Finally, we rewrite Theorem \ref{Hakimi} in terms of hypergraph theory. Let $V = \{v_{1}, \ldots, v_{n} \}$ be a finite set and $d \in \mathbf{N}$ with $d \geq 2$. Let $e_{1}, \ldots ,e_{m}$ be subsets of $V$ such that $\cup_{i=1}^{m}e_{m} = V$ and $|e_{i}| = d$ for each $i \in \langle m \rangle$. Set $E = \{e_{i} \mid i \in \langle m \rangle \}$. Let $w : E \rightarrow \mathbf{R}$ be a map and we assume that $w \geq 0$ in what follows. Then the triple $(V, E, w)$ is called a \textit{$d$-uniform weighted hypergraph}. The elements of $V$, $E$ and $\{ w(e_{i}) \in \mathbf{R} \mid i \in \langle m \rangle \}$ are called \textit{vertices}, \textit{edges} and \textit{weights} respectively. For each $i \in \langle n \rangle$, we set
\begin{equation*}
d_{i} := \sum_{v_{i} \in e_{j}} w(e_{j})
\end{equation*}
and call it the \textit{weighted degree at $v_{i}$}. The sequence $D := (d_{1}, \cdots, d_{n})$ is called the \textit{weighted degree sequence} of $(V, E, w)$. For a given strongly hollow symmetric nonnegative $d$-hypermatrix $A$, one can associate a $d$-uniform weighted hypergraph $(V, E, w)$ as follows: Set 
\begin{equation*}
V = \{v_{1}, \ldots, v_{n} \},
\end{equation*}
\begin{equation*}
E = \left\{  e_{i_{1}\cdots i_{d}} := \{v_{i_{1}}, \ldots, v_{i_{d}}\} \mid 1 \leq i_{1} < \cdots < i_{d} \leq n \right\},
\end{equation*}
and
\begin{equation*}
w(e_{i_{1}\cdots i_{d}}) = A(i_{1}, \ldots, i_{d}).
\end{equation*}
Then the triple $(V, E, w)$ is a $d$-uniform weighted hypergraph. Conversely, for a given $d$-uniform weighted hypergraph $(V, E, w)$, one can associate a strongly hollow symmetric nonnegative $d$-hypermatrix. Consider a strongly hollow symmetric nonnegative $d$-hypermatrix $A$ whose $j$-th slice sum is $r_{j} \geq 0$. Then the associated $d$-uniform weighted hypergraph $(V, E, w)$ satisfies
\begin{equation*}
d_{j} = \frac{r_{j}}{(d-1)!}
\end{equation*}
for each $j \in \langle n \rangle$. Thus Theorem \ref{Hakimi} can be interpreted as a result about realizability of a $d$-uniform weighted hypergraph for a given weighted degree sequence. 

\begin{theo}
Let $d, n \in \mathbf{N}$ with $d \geq 2$ and $n \geq d$. Let $D = (d_{1}, \ldots, d_{n})$ be a sequence with $d_{1} \geq \cdots \geq d_{n} \geq 0$. Then there exists a $d$-uniform weighted hypergraph with the weighted degree sequence $D$ if and only if the inequality
\begin{equation}
\label{condition2}
dd_{1} \leq \sum_{i=1}^{n}d_{i}
\end{equation}
holds. Moreover, if this holds, then there exists a $d$-uniform weighted hypergraph with the weighted degree sequence $D$ that has at most $n$ edges with positive weights.
\end{theo}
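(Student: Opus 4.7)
The plan is to prove this theorem as a direct translation of Theorem \ref{Hakimi} under the hypermatrix/hypergraph dictionary already set up in the preceding paragraphs. The correspondence sends a strongly off-diagonal symmetric nonnegative $d$-hypermatrix $A$ on $\langle n \rangle^{d}$ to the $d$-uniform weighted hypergraph on $V = \{v_{1}, \ldots, v_{n}\}$ whose edge set is all $d$-subsets and whose weight on $\{i_{1}, \ldots, i_{d}\}$ (with $i_{1} < \cdots < i_{d}$) equals $A(i_{1}, \ldots, i_{d})$. As noted just before the theorem, the weighted degree at $v_{j}$ in the associated hypergraph is $r_{j}/(d-1)!$, where $r_{j}$ is the $j$-th slice sum of $A$.

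First I would set $r_{j} := (d-1)! \, d_{j}$ for each $j$ and define $R := (r_{1}, \ldots, r_{n})$. Since $d_{1} \geq \cdots \geq d_{n} \geq 0$, this $R$ is admissible as input for Theorem \ref{Hakimi}, and the inequality $d d_{1} \leq \sum_{i=1}^{n} d_{i}$ is equivalent to $d r_{1} \leq \sum_{i=1}^{n} r_{i}$ after multiplying by $(d-1)!$. For the forward direction, if a $d$-uniform weighted hypergraph with degree sequence $D$ exists, the associated hypermatrix lies in $\mathcal{N}_{0}(d; R)$, so Theorem \ref{Hakimi} forces $d r_{1} \leq \sum r_{i}$, yielding (\ref{condition2}). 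Conversely, if (\ref{condition2}) holds, Theorem \ref{Hakimi} produces some $A \in \mathcal{N}_{0}(d; R)$, and the hypergraph associated to $A$ has weighted degree sequence $D$ by the identity $d_{j} = r_{j}/(d-1)!$.

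For the refinement, Theorem \ref{Hakimi} guarantees the existence of some $A \in \mathcal{N}_{0}(d; R)$ with at most $n \cdot d!$ nonzero entries. Because $A$ is symmetric, the set of nonzero entries is a union of $\mathfrak{S}_{d}$-orbits, each orbit corresponding to a single edge $\{i_{1}, \ldots, i_{d}\}$ and containing exactly $d!$ ordered tuples (here the strongly off-diagonal hypothesis guarantees the stabilizer is trivial). Thus $n \cdot d!$ nonzero entries translate to at most $n$ edges with positive weight in the associated hypergraph, giving the promised bound.

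The only step that requires any care is the orbit count: one must observe that strong off-diagonality ensures all $d$ indices of a nonzero entry are distinct, so the symmetric group acts freely on such entries and each edge accounts for exactly $d!$ nonzero entries. Everything else is a bookkeeping restatement of results already proved, so there is no substantive obstacle.
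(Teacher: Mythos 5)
Your proposal is correct and matches the paper's approach: the paper offers no separate proof, presenting the theorem as a direct reinterpretation of Theorem \ref{Hakimi} via the hypermatrix--hypergraph dictionary, and your argument simply writes out that translation (including the rescaling $r_{j} = (d-1)!\,d_{j}$ and the $d!$-orbit count converting $n \cdot d!$ nonzero entries into at most $n$ positively weighted edges) in explicit detail.
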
 

For $n=d$ or $d+1$, Lemma \ref{polytope} and the above theorem imply the following corollary:
\begin{corr}
Let $d \in \mathbf{N}$ with $d \geq 2$ and $n =d$ or $d+1$. Let $D = (d_{1}, \ldots, d_{n})$ be a sequence with $d_{1} \geq \cdots \geq d_{n} \geq 0$. If the inequality $(\ref{condition2})$ holds, then there exists a unique $d$-uniform weighted hypergraph with the weighted degree sequence $D$.
\end{corr}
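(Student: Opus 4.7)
The plan is to invoke the hypergraph theorem above for existence and then transfer the singleton statement of Lemma \ref{polytope} back to the hypergraph side for uniqueness. Existence is immediate: the inequality (\ref{condition2}) is precisely the hypothesis of the theorem just preceding this corollary, so a $d$-uniform weighted hypergraph with the weighted degree sequence $D$ exists.

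For uniqueness, I would set $r_i := (d-1)!\, d_i$ for each $i \in \langle n \rangle$ and put $R := (r_1, \ldots, r_n)$. Then $R$ is nonincreasing and nonnegative, and the inequality (\ref{condition2}) is equivalent to (\ref{condition}) for $R$ (both sides of the inequality simply rescale by the factor $(d-1)!$). Theorem \ref{Hakimi} therefore gives $\mathcal{N}_0(d; R) \neq \emptyset$, and because $n = d$ or $n = d+1$, Lemma \ref{polytope} guarantees that $\mathcal{N}_0(d; R)$ is a singleton. Any $d$-uniform weighted hypergraph with weighted degree sequence $D$ corresponds, via the bijective dictionary between such hypergraphs and strongly off-diagonal symmetric nonnegative $d$-hypermatrices established in the paragraph before the theorem, to an element of $\mathcal{N}_0(d; R)$; since there is only one such element, only one such hypergraph can exist.

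There is essentially no obstacle here — the corollary is a direct translation of the singleton case of Lemma \ref{polytope} through the hypermatrix–hypergraph dictionary. The only point worth verifying carefully is the scaling identity $r_j = (d-1)!\, d_j$ that converts the slice-sum condition (\ref{slice-sum}) into the weighted-degree condition $d_j = r_j/(d-1)!$, which is precisely how the dictionary was set up.
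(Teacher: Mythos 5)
Your proposal is correct and follows exactly the route the paper intends: the paper derives this corollary directly from Lemma \ref{polytope} and the preceding theorem via the hypermatrix--hypergraph dictionary with the scaling $r_j = (d-1)!\,d_j$, which is precisely what you do. No gaps.
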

A $d$-uniform hypergraph is called \textit{simple} if it has no repeated edges. The degree sequence of a simple $d$-uniform hypergraph $(V,E)$ can be seen as the weighted degree sequence of a $d$-uniform weighted hypermatrix $(V, E, w)$ with $w(e) =1$ for any $e \in E$.  A nonincreasing sequence of nonnegative integers is called \textit{$d$-graphic} if it can be realized as the degree sequence of some simple $d$-uniform hypergraph. Dewdney \cite{Dewdney} proved the following:
\begin{theo}\emph{(\cite{Dewdney})}
\label{DewdneyCriterion}
Let $d, n \in \mathbf{N}$ with $d \geq 2$ and $n \geq d$. Let $D= (d_{1}, \ldots, d_{n})$ be a nonincreasing sequence of nonnegative integers. Then $D$ is $d$-graphic if and only if there exists a nonincreasing sequence $D' = (d'_{2}, \ldots, d'_{n})$ of nonnegative integers satisfying all of the following conditions:
\begin{enumerate}
\item $D'$ is $(d-1)$-graphic. 
\item $\sum_{i=2}^{n} d'_{i} = (d-1)d_{1}$.
\item $D'' := (d_{2}-d'_{2}, \ldots, d_{n}-d'_{n}, 0)$ is $d$-graphic.
\end{enumerate}
\end{theo}
\begin{rema}
If $D= (d_{1}, \ldots, d_{n})$, a nonincreasing sequence of nonnegative integers, is $d$-graphic, then $D$ satisfies the condition $(\ref{condition2})$. In fact, if $D$ is $d$-graphic, there exists $D'$ as in Theorem \ref{DewdneyCriterion}. Then we have
\begin{equation*}
dd_{1} = d_{1} + \sum_{i=2}^{n} d'_{i} \leq d_{1} + \sum_{i=2}^{n} d_{i} = \sum_{i=1}^{n}d_{i},
\end{equation*}
where the condition $(2)$ is used at the left equality and the middle inequality follows from the condition $(3)$. This remark is reasonable since the degree sequence of a simple $d$-uniform hypergraph $(V,E)$ is nothing but the weighted degree sequence of a $d$-uniform weighted hypergraph $(V, E, w)$ with $w(e) =1$ for any $e \in E$. 
\end{rema}

\section*{Acknowledgments.} I am indebted to Takefumi Kondo for giving me advice to use Rado's theorem \cite{Rado} for finding the convex hulls of Grassmanians. Before his advice, I had used Hakimi's theorem \cite{Hakimi} and I could obtain the results about only Grassmannians of two dimensional subspaces. This research was largely expanded by his advice. After I wrote the first version of this paper, Josse van Dobben de Bruyn made me realize that Fillmore and Williams \cite{FW} obtained the convex hulls of Grassmannians by a different method. I appreciate his comment. I also thank my advisor Shin Nayatani for valuable discussions and comments. Finally, I would like to thank the anonymous referees, whose careful proofreading and valuable comments have greatly improved the paper.

\end{document}